\newtheorem{theorem}{Theorem}
\theoremstyle{plain}
\newtheorem{definition}{Definition}
\newtheorem{example}{Example}
\newtheorem{remark}{Remark}
\numberwithin{equation}{section}
\begin{document}
\title[New type of coupled fixed point theorem]{A new type of coupled fixed
point theorem in partially ordered complete metric space}
\author{Isa Yildirim}
\address{Department of Mathematics, Faculty of Science, Ataturk University,
25240 Erzurum, Turkey.}
\email{isayildirim@atauni.edu.tr}
\subjclass[2000]{54H10, 54H25}
\keywords{Coupled fixed point; Partially ordered set; Mixed monotone
mappings.}

\begin{abstract}
In this paper, we introduce a new type of coupled fixed point theorem in
partially ordered complete metric space. We give an example to support of
our result.
\end{abstract}

\maketitle

\section{Introduction and Preliminaries}

Fixed point theory in recent has developed rapidly in partially ordered
metric spaces; that is, metric spaces endowed with a partial ordering. The
first result in this direction was obtained by Ran and Reurings \cite{r1}.
They presented some applications of results of matrix equations. In \cite{r2}%
, Nieto and Lopez extended the result of Ran and Reurings \cite{r3}, for non
decreasing mappings and applied their result to get a unique solution for a
first order differential equation. While Agrawal et al. \cite{r4} and
O'Regan and Petrutel \cite{r5} studied some results for generalized
contractions in ordered metric spaces. Bhaskar and Lakshmikantham \cite{r6}
obtained some coupled fixed point results for mixed monotone operators $%
F:X\times X\rightarrow X$ which satisfy a certain contractive type
condition, where $X$ is a partially ordered metric space. They established
three kinds of coupled fixed point results: 1) existenxe theorems; 2)
existence and uniqueness theorem; and 3) theorems that ensure the equality
of the coupled fixed point components. Also, they applied results to the
study of existence and uniqueness of solution for a periodic boundary value
problem. After, Berinde \cite{b} extended the coupled fixed point theorems
for mixed monotone operators $F:X\times X\rightarrow X$ obtained in Bhaskar
and Lakshmikantham \cite{r6} by significantly weakening the involved
contractive condition.

In order to state the main result in this paper, we need the following
notions.

\begin{definition}
Let $\left( X,\leq \right) $ be a partially ordered set and endow the
product space $X\times X$ with the following partial order:%
\begin{equation*}
\text{for }\left( x,y\right) ,\left( u,v\right) \in X\times X,\text{ }\left(
u,v\right) \leq \left( x,y\right) \Leftrightarrow x\geq u,y\leq v.
\end{equation*}

We say that a mapping $F:X\times X\rightarrow X$ has the mixed monotone
property if $F\left( x,y\right) $ is monotone nondecreasing in $x$ and is
monotone increasing in $y$, that is, for any $x,y\in X$,%
\begin{equation*}
x_{1},x_{2}\in X,\text{ }x_{1}\leq x_{2}\Rightarrow F\left( x_{1},y\right)
\leq F\left( x_{2},y\right)
\end{equation*}%
and%
\begin{equation*}
y_{1},y_{2}\in X,\text{ }y_{1}\leq y_{2}\Rightarrow F\left( x,y_{1}\right)
\geq F\left( x,y_{2}\right) .
\end{equation*}
\end{definition}

\begin{definition}
An element $\left( x,y\right) \in X\times X$ is called a coupled fixed point
of the mapping $F:X\times X\rightarrow X$ if%
\begin{equation*}
F\left( x,y\right) =x\text{ \ and }F\left( y,x\right) =y.
\end{equation*}
\end{definition}

\begin{theorem}
\label{cf1} \cite{r6} Let $\left( X,\leq \right) $ be a partially ordered
set and \ suppose there is a metric $d$ on $X$ such that $\left( X,d\right) $
is a complete metric space. Let $F:X\times X\rightarrow X$ be a continuous
mapping having the mixed monotone property on $X$. Assume that there exists
a constant $k\in \lbrack 0,1)$ with%
\begin{equation}
d\left( F(x,y),F(u,v)\right) \leq \frac{k}{2}\left[ d\left( x,u\right)
+d\left( y,v\right) \right] \text{ , \ }\forall x\geq u\text{, }y\leq v.
\label{id01}
\end{equation}%
if there exist $x_{0},y_{0}\in X$ such that%
\begin{equation*}
x_{0}\leq F(x_{0},y_{0})\text{ and }y_{0}\geq F(y_{0},x_{0})\text{,}
\end{equation*}%
then there exist $x,y\in X$ such that%
\begin{equation*}
x=F(x,y)\text{ and }y=F(y,x).
\end{equation*}
\end{theorem}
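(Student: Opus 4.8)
The plan is to run a Picard-type iteration on the two slots of $F$ simultaneously. Starting from the given $x_0,y_0$, I would set
\begin{equation*}
x_{n+1}=F(x_n,y_n),\qquad y_{n+1}=F(y_n,x_n),\qquad n\geq 0 .
\end{equation*}
The hypotheses $x_0\leq F(x_0,y_0)=x_1$ and $y_0\geq F(y_0,x_0)=y_1$ give the base case, and an induction using the mixed monotone property---applied once in each variable at each step---shows that $x_n\leq x_{n+1}$ and $y_n\geq y_{n+1}$ for all $n$; thus $\{x_n\}$ is nondecreasing and $\{y_n\}$ is nonincreasing, and in particular consecutive iterates are comparable in the sense required by \eqref{id01}.

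The heart of the argument is converting this monotonicity into a Cauchy estimate. Since $x_n\geq x_{n-1}$ and $y_n\leq y_{n-1}$, condition \eqref{id01} applies to the pairs $(x_n,y_n),(x_{n-1},y_{n-1})$ and, after interchanging the roles of the arguments, to $(y_{n-1},x_{n-1}),(y_n,x_n)$, which yields
\begin{equation*}
\max\bigl\{d(x_{n+1},x_n),\,d(y_{n+1},y_n)\bigr\}\leq \frac{k}{2}\bigl[d(x_n,x_{n-1})+d(y_n,y_{n-1})\bigr].
\end{equation*}
Putting $\delta_n=d(x_{n+1},x_n)+d(y_{n+1},y_n)$ and adding the two resulting inequalities gives $\delta_n\leq k\,\delta_{n-1}$, hence $\delta_n\leq k^{\,n}\delta_0$. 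A standard geometric-series estimate then bounds $d(x_n,x_m)+d(y_n,y_m)$ by $\frac{k^{\,n}}{1-k}\,\delta_0$ for $m>n$, so both $\{x_n\}$ and $\{y_n\}$ are Cauchy in $(X,d)$.

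By completeness there exist $x,y\in X$ with $x_n\to x$ and $y_n\to y$. Passing to the limit in the recursion and using continuity of $F$, $x=\lim_n x_{n+1}=\lim_n F(x_n,y_n)=F(x,y)$ and similarly $y=F(y,x)$, so $(x,y)$ is the desired coupled fixed point. I expect the only genuine obstacle to be the order-bookkeeping: at each application of \eqref{id01} one must verify that the arguments are actually comparable ($x\geq u$ and $y\leq v$), and one must symmetrize the estimates for the $x$- and $y$-sequences correctly before summing them; the remaining steps are routine.
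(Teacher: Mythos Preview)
Your argument is correct and is precisely the classical Bhaskar--Lakshmikantham proof. Note, however, that the paper does not actually supply a proof of this theorem: it is quoted verbatim from \cite{r6} as background, so there is no ``paper's own proof'' to compare against. That said, the skeleton you outline---define the Picard iterates $x_{n+1}=F(x_n,y_n)$, $y_{n+1}=F(y_n,x_n)$, establish monotonicity of the two sequences by induction on the mixed monotone property, derive a geometric decay $\delta_n\le k\,\delta_{n-1}$ for $\delta_n=d(x_{n+1},x_n)+d(y_{n+1},y_n)$, sum to get Cauchy, and finish by continuity---is exactly the template the paper reuses in its proof of Theorem~\ref{cf0}(a), with the constant $k$ replaced there by the variable factor $\beta_n$ coming from the more elaborate contractive condition~\eqref{id1}. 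So your proposal is both correct and in the same spirit as the paper's own arguments.
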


In \cite{r6} Bhaskar and Lakshmikantham also established some uniqueness
results for coupled fixed points, as well as existence of fixed points of $F$
($x$ is a fixed point of $F$ if\ $F\left( x,x\right) =x$).

Inspired by above works, we derive new coupled fixed point theorems for
mapping having the mixed monotone property $F:X\times X\rightarrow X$ \ in
partially ordered metric space and we give an example to support our result.

\section{Main Results}

\begin{theorem}
\label{cf0} Let $\left( X,\leq \right) $ be a partially ordered set and \
suppose there is a metric $d$ on $X$ such that $\left( X,d\right) $ is a
complete metric space. Let $F:X\times X\rightarrow X$ be a \ continuous
mapping having the mixed monotone property on $X$. Assume that $F$ satisfies
the following condition:%
\begin{equation}
d\left( F(x,y),F(u,v)\right) \leq \delta \left( x,y,u,v\right) \left[
d\left( x,u\right) +d\left( y,v\right) \right] \text{ }.  \label{id1}
\end{equation}%
where%
\begin{equation*}
\delta \left( x,y,u,v\right) =\frac{d\left( x,F(u,v)\right) +d\left(
y,F(v,u)\right) +d\left( u,F(x,y)\right) +d\left( v,F(y,x)\right) }{1+2\left[
d\left( x,F(x,y)\right) +d\left( y,F(y,x)\right) +d\left( u,F(u,v)\right)
+d\left( v,F(v,u)\right) \right] }
\end{equation*}%
for all $x,y,u,v\in X$ with $x\geq u$ and $y\leq v.$

If there exist $x_{0},y_{0}\in X$ such that%
\begin{equation*}
x_{0}\leq F(x_{0},y_{0})\text{ and }y_{0}\geq F(y_{0},x_{0})\text{,}
\end{equation*}%
then

a) $F$ has at least a coupled fixed point there exist $\left( x,y\right) \in
X$ such that%
\begin{equation*}
x=F(x,y)\text{ and }y=F(y,x).
\end{equation*}

b) if $\left( x,y\right) ,\left( u,v\right) $ are two distinct coupled fixed
points of $F$, then $d\left( x,u\right) +d\left( y,v\right) \geq \frac{1}{4}$%
.
\end{theorem}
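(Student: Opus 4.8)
The plan is to run the Picard-type double iteration of Bhaskar--Lakshmikantham and then extract a uniform contraction constant from the special shape of $\delta$. First I would fix $x_0,y_0$ as in the hypothesis and set $x_{n+1}=F(x_n,y_n)$, $y_{n+1}=F(y_n,x_n)$. Using the mixed monotone property together with $x_0\le F(x_0,y_0)$ and $y_0\ge F(y_0,x_0)$, an easy induction gives $x_n\le x_{n+1}$ and $y_n\ge y_{n+1}$ for every $n$; in particular consecutive iterates are comparable in the sense demanded by \eqref{id1} (with $x_{n+1}\ge x_n$ and $y_{n+1}\le y_n$, and symmetrically after swapping the two coordinates). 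Write $s_n=d(x_n,x_{n+1})+d(y_n,y_{n+1})$. If $s_n=0$ for some $n$, then $(x_n,y_n)$ is already a coupled fixed point and (a) is done, so assume $s_n>0$ for all $n$.

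The heart of the argument is to evaluate $\delta$ along the iteration. Applying \eqref{id1} with $(x,y,u,v)=(x_{n+1},y_{n+1},x_n,y_n)$ and substituting the definitions of the iterates, all the ``cross'' distances in the numerator of $\delta$ collapse to $d(x_n,x_{n+2})+d(y_n,y_{n+2})$, while the ``diagonal'' distances in the denominator collapse to $s_n+s_{n+1}$; the same is true when \eqref{id1} is used in its swapped form to estimate $d(y_{n+1},y_{n+2})$. Setting $t_n=s_n+s_{n+1}$ and using the triangle inequality $d(x_n,x_{n+2})+d(y_n,y_{n+2})\le t_n$ together with the fact that $t\mapsto t/(1+2t)$ is increasing with supremum $\tfrac12$, the coefficient occurring at step $n$ satisfies $\delta_n\le t_n/(1+2t_n)<\tfrac12$. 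From \eqref{id1} this yields $d(x_{n+1},x_{n+2})\le\delta_n s_n$ and $d(y_{n+1},y_{n+2})\le\delta_n s_n$, hence $s_{n+1}\le 2\delta_n s_n<s_n$. Thus $(s_n)$ is strictly decreasing, so $t_n\le 2s_n\le 2s_0$ and therefore $\delta_n\le\frac{2s_0}{1+4s_0}$; with $\lambda:=\frac{4s_0}{1+4s_0}\in(0,1)$ we get $s_{n+1}\le\lambda s_n$, i.e.\ $s_n\le\lambda^n s_0$. This geometric bound makes $(x_n)$ and $(y_n)$ Cauchy, so by completeness $x_n\to x$, $y_n\to y$, and continuity of $F$ gives $x=F(x,y)$, $y=F(y,x)$: this proves (a).

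For (b), let $(x,y)$ and $(u,v)$ be distinct coupled fixed points comparable in the product order, say $x\ge u$ and $y\le v$, so that \eqref{id1} applies. Substituting $F(x,y)=x$, $F(y,x)=y$, $F(u,v)=u$, $F(v,u)=v$ into $\delta(x,y,u,v)$ makes the bracket in the denominator vanish, so the denominator is $1$, while the numerator is $2[d(x,u)+d(y,v)]$; hence $\delta(x,y,u,v)=2[d(x,u)+d(y,v)]$, and the same computation gives $\delta(v,u,y,x)=2[d(x,u)+d(y,v)]$. Using \eqref{id1} to bound $d(x,u)=d(F(x,y),F(u,v))$ and $d(y,v)=d(F(v,u),F(y,x))$ and adding yields $d(x,u)+d(y,v)\le 4[d(x,u)+d(y,v)]^2$. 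As $\rho:=d(x,u)+d(y,v)>0$, dividing by $\rho$ gives $\rho\ge\tfrac14$.

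I expect the main obstacle to be part (a): since $\delta$ genuinely depends on all four arguments, no off-the-shelf Banach/Caristi-type argument applies, and the estimate works only because, \emph{evaluated along the iteration}, $\delta$ takes the self-referential form (numerator)$/(1+2\cdot(\text{a quantity dominating the numerator}))$, which forces $\delta_n<\tfrac12$, after which the monotonicity of $(s_n)$ upgrades this to a genuine contraction ratio $\lambda<1$. A secondary point worth flagging is that the hypotheses do not force two coupled fixed points to be comparable, so (b) should be read as a statement about comparable coupled fixed points (or one restricts to the order interval generated by the iteration).
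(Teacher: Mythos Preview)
Your proof is correct and follows essentially the same strategy as the paper's: iterate, evaluate $\delta$ along consecutive iterates to see it is bounded by $t_n/(1+2t_n)<\tfrac12$, deduce a geometric decay of $s_n$, and for (b) substitute the fixed-point identities so that the denominator of $\delta$ becomes $1$ and the inequality collapses to $\rho\le 4\rho^2$. The only cosmetic differences are that the paper extracts the uniform contraction constant by observing that its $\beta_n=2t_n/(1+2t_n)$ is nonincreasing (hence $\beta_n\le\beta_1<1$) rather than via your direct bound $2\delta_n\le\lambda=4s_0/(1+4s_0)$, and that the paper applies \eqref{id1} in part (b) without explicitly noting the comparability hypothesis that you correctly flag.
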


\begin{proof}
a) Consider the two sequences $\left\{ x_{n}\right\} $ and $\left\{
y_{n}\right\} $ in $X$ such that,%
\begin{equation}
x_{n+1}=F(x_{n},y_{n})\text{ \ and \ }y_{n+1}=F(y_{n},x_{n})  \label{iy1}
\end{equation}%
for all\ $n=0,1,2,...$.

Now, we claim that $\left\{ x_{n}\right\} $ is nondecreasing and $\left\{
y_{n}\right\} $ is nonincreasing i.e., 
\begin{equation}
x_{n}\leq x_{n+1}\text{ \ and \ \ }y_{n}\geq \text{\ }y_{n+1}  \label{iy2}
\end{equation}%
for all\ $n=0,1,2,...$. From statement of theorem, we know that $%
x_{0},y_{0}\in X$ with 
\begin{equation}
x_{0}\leq F(x_{0},y_{0})\text{ and }y_{0}\geq F(y_{0},x_{0})  \label{iy3}
\end{equation}

By using the mixed monotone property of $F$, we write%
\begin{equation}
x_{1}=F(x_{0},y_{0})\text{ \ and \ }y_{1}=F(y_{0},x_{0}).  \label{i1}
\end{equation}

Therefore $x_{0}\leq x_{1}$ and $y_{0}\geq y_{1}$. That is, the inequality (%
\ref{iy2}) is true for $n=0$.

Assume $x_{n}\leq x_{n+1}$ \ and \ \ $y_{n}\geq $\ $y_{n+1}$ for some $n$.
Now we shall prove that (\ref{iy2}) is true for $n+1$.

Indeed, from (\ref{iy2}) and the mixed monotone property of $F$, we have%
\begin{equation*}
x_{n+2}=F(x_{n+1},y_{n+1})\geq F(\text{\ }y_{n},x_{n+1})\geq
F(x_{n},y_{n})=x_{n+1}
\end{equation*}%
and%
\begin{equation*}
y_{n+2}=F(y_{n+1},x_{n+1})\leq F(\text{\ }y_{n},x_{n+1})\leq
F(y_{n},x_{n})=y_{n+1}.
\end{equation*}

Hence, by induction, $x_{n}\leq x_{n+1}$ \ and \ \ $y_{n}\geq $\ $y_{n+1}$
for all $n$.

Since (\ref{id1}), $x_{n-1}\leq x_{n}$ \ and \ \ $y_{n-1}\geq $\ $y_{n}$, we
have%
\begin{eqnarray*}
&&d\left( F(x_{n},y_{n}),F(x_{n-1},y_{n-1})\right) \\
&\leq &\left( \frac{d\left( x_{n},F(x_{n-1},y_{n-1})\right) +d\left(
y_{n},F(y_{n-1},x_{n-1})\right) +d\left( x_{n-1},F(x_{n},y_{n})\right)
+d\left( y_{n-1},F(y_{n},x_{n})\right) }{1+2\left[ d\left(
x_{n},F(x_{n},y_{n})\right) +d\left( y_{n},F(y_{n},x_{n})\right) +d\left(
x_{n-1},F(x_{n-1},y_{n-1})\right) +d\left( y_{n-1},F(y_{n-1},x_{n-1})\right) %
\right] }\right) \\
&&\left[ d\left( x_{n},x_{n-1}\right) +d\left( y_{n},y_{n-1}\right) \right]
\\
&=&\left( \frac{d\left( x_{n},x_{n}\right) +d\left( y_{n},y_{n}\right)
+d\left( x_{n-1},x_{n+1}\right) +d\left( y_{n-1},y_{n+1}\right) }{1+2\left[
d\left( x_{n},x_{n+1}\right) +d\left( y_{n},y_{n+1}\right) +d\left(
x_{n-1},x_{n}\right) +d\left( y_{n-1},y_{n}\right) \right] }\right) \left[
d\left( x_{n},x_{n-1}\right) +d\left( y_{n},y_{n-1}\right) \right] \\
&=&\left( \frac{d\left( x_{n-1},x_{n+1}\right) +d\left(
y_{n-1},y_{n+1}\right) }{1+2\left[ d\left( x_{n},x_{n+1}\right) +d\left(
y_{n},y_{n+1}\right) +d\left( x_{n-1},x_{n}\right) +d\left(
y_{n-1},y_{n}\right) \right] }\right) \left[ d\left( x_{n},x_{n-1}\right)
+d\left( y_{n},y_{n-1}\right) \right] \\
&\leq &\left( \frac{d\left( x_{n-1},x_{n}\right) +d\left(
x_{n},x_{n+1}\right) +d\left( y_{n-1},y_{n}\right) +d\left(
y_{n},y_{n+1}\right) }{1+2\left[ d\left( x_{n},x_{n+1}\right) +d\left(
y_{n},y_{n+1}\right) +d\left( x_{n-1},x_{n}\right) +d\left(
y_{n-1},y_{n}\right) \right] }\right) \left[ d\left( x_{n},x_{n-1}\right)
+d\left( y_{n},y_{n-1}\right) \right]
\end{eqnarray*}

This implies%
\begin{eqnarray}
d\left( x_{n+1},x_{n}\right) &\leq &\left( \frac{d\left(
x_{n-1},x_{n}\right) +d\left( x_{n},x_{n+1}\right) +d\left(
y_{n-1},y_{n}\right) +d\left( y_{n},y_{n+1}\right) }{1+2\left[ d\left(
x_{n},x_{n+1}\right) +d\left( y_{n},y_{n+1}\right) +d\left(
x_{n-1},x_{n}\right) +d\left( y_{n-1},y_{n}\right) \right] }\right)  \notag
\\
&&\left[ d\left( x_{n},x_{n-1}\right) +d\left( y_{n},y_{n-1}\right) \right] .
\label{iy4}
\end{eqnarray}

Similarly, from (\ref{id1}), $y_{n-1}\geq $\ $y_{n}$ and $x_{n-1}\leq x_{n}$%
, we obtain%
\begin{eqnarray}
d\left( y_{n+1},y_{n}\right) &\leq &\left( \frac{d\left(
y_{n-1},y_{n}\right) +d\left( y_{n},y_{n+1}\right) +d\left(
x_{n-1},x_{n}\right) +d\left( x_{n},x_{n+1}\right) }{1+2\left[ d\left(
y_{n},y_{n+1}\right) +d\left( x_{n},x_{n+1}\right) +d\left(
y_{n-1},y_{n}\right) +d\left( x_{n-1},x_{n}\right) \right] }\right)  \notag
\\
&&\left[ d\left( y_{n},y_{n-1}\right) +d\left( x_{n},x_{n-1}\right) \right] .
\label{iy5}
\end{eqnarray}

From this inequalities (\ref{iy4}) and (\ref{iy5}), we get%
\begin{eqnarray*}
d\left( x_{n+1},x_{n}\right) +d\left( y_{n+1},y_{n}\right) &\leq &2\left( 
\frac{d\left( x_{n-1},x_{n}\right) +d\left( x_{n},x_{n+1}\right) +d\left(
y_{n-1},y_{n}\right) +d\left( y_{n},y_{n+1}\right) }{1+2\left[ d\left(
x_{n},x_{n+1}\right) +d\left( y_{n},y_{n+1}\right) +d\left(
x_{n-1},x_{n}\right) +d\left( y_{n-1},y_{n}\right) \right] }\right) \\
&&\left[ d\left( x_{n},x_{n-1}\right) +d\left( y_{n},y_{n-1}\right) \right] .
\end{eqnarray*}

Now, let%
\begin{equation*}
\beta _{n}=2\left( \frac{d\left( x_{n-1},x_{n}\right) +d\left(
x_{n},x_{n+1}\right) +d\left( y_{n-1},y_{n}\right) +d\left(
y_{n},y_{n+1}\right) }{1+2\left[ d\left( x_{n},x_{n+1}\right) +d\left(
y_{n},y_{n+1}\right) +d\left( x_{n-1},x_{n}\right) +d\left(
y_{n-1},y_{n}\right) \right] }\right) .
\end{equation*}

Then%
\begin{eqnarray}
d\left( x_{n+1},x_{n}\right) +d\left( y_{n+1},y_{n}\right) &\leq &\beta _{n} 
\left[ d\left( x_{n},x_{n-1}\right) +d\left( y_{n},y_{n-1}\right) \right]
\label{iy6} \\
&\leq &\beta _{n}\beta _{n-1}\left[ d\left( x_{n-1},x_{n-2}\right) +d\left(
y_{n-1},y_{n-2}\right) \right]  \notag \\
&&\vdots  \notag \\
&\leq &\beta _{n}\beta _{n-1}...\beta _{1}\left[ d\left( x_{1},x_{0}\right)
+d\left( y_{1},y_{0}\right) \right]  \notag
\end{eqnarray}

Observe that $\left( \beta _{n}\right) $ is nonincreasing, with positive
terms. So, $\beta _{n}\beta _{n-1}...\beta _{1}\leq \beta _{1}^{n}$ and $%
\beta _{1}^{n}\rightarrow 0$. It follows that%
\begin{equation*}
\lim_{n\rightarrow \infty }\left( \beta _{n}\beta _{n-1}...\beta _{1}\right)
=0.
\end{equation*}

Hence, this implies that%
\begin{equation*}
\lim_{n\rightarrow \infty }\left[ d\left( x_{n+1},x_{n}\right) +d\left(
y_{n+1},y_{n}\right) \right] =0.
\end{equation*}

From this limit, we have%
\begin{equation*}
\lim_{n\rightarrow \infty }d\left( x_{n+1},x_{n}\right) =\lim_{n\rightarrow
\infty }d\left( y_{n+1},y_{n}\right) =0.
\end{equation*}

We claim that $\left\{ x_{n}\right\} $ and $\left\{ y_{n}\right\} $ are a
Cauchy sequence in $X$. Let $n<m$. Then, from the triangle inequality and (%
\ref{iy4})-(\ref{iy6}), we have%
\begin{eqnarray*}
d\left( x_{n},x_{m}\right) &\leq &d\left( x_{n},x_{n+1}\right) +d\left(
x_{n+1},x_{n+2}\right) +...+d\left( x_{m-1},x_{m}\right) \\
&\leq &\frac{\beta _{1}^{n}}{2}\left[ d\left( x_{1},x_{0})\right) +d\left(
y_{1},y_{0}\right) \right] +\frac{\beta _{1}^{n+1}}{2}\left[ d\left(
x_{1},x_{0}\right) +d\left( y_{1},y_{0}\right) \right] \\
&&+...+\frac{\beta _{1}^{m-1}}{2}\left[ d\left( x_{1},x_{0}\right) +d\left(
y_{1},y_{0}\right) \right] \\
&=&\frac{\beta _{1}^{n}}{2}\text{ \ }\left( \frac{1-\beta _{1}^{m-n}}{%
1-\beta _{1}}\right) \left[ d\left( x_{1},x_{0}\right) +d\left(
y_{1},y_{0}\right) \right] \\
&\leq &\frac{\beta _{1}^{n}}{2}\text{ \ }\left( \frac{1-\beta _{1}}{1-\beta
_{1}}\right) \left[ d\left( x_{1},x_{0}\right) +d\left( y_{1},y_{0}\right) %
\right] \\
&=&\frac{\beta _{1}^{n}}{2}\left[ d\left( x_{1},x_{0}\right) +d\left(
y_{1},y_{0}\right) \right]
\end{eqnarray*}%
and%
\begin{eqnarray*}
d\left( y_{n},y_{m}\right) &\leq &d\left( y_{n},y_{n+1}\right) +d\left(
y_{n+1},y_{n+2}\right) +...+d\left( y_{m-1},y_{m}\right) \\
&\leq &\frac{\beta _{1}^{n}}{2}\left[ d\left( y_{1},y_{0}\right) +d\left(
x_{1},x_{0}\right) \right] +\frac{\beta _{1}^{n+1}}{2}\left[ d\left(
y_{1},y_{0}\right) +d\left( x_{1},x_{0}\right) \right] \\
&&+...+\frac{\beta _{1}^{m-1}}{2}\left[ d\left( y_{1},y_{0}\right) +d\left(
x_{1},x_{0}\right) \right] \\
&\leq &\frac{\beta _{1}^{n}}{2}\left[ d\left( y_{1},y_{0}\right) +d\left(
x_{1},x_{0}\right) \right]
\end{eqnarray*}

By adding these two inequalities, we obtain%
\begin{equation*}
d\left( x_{n},x_{m}\right) +d\left( y_{n},y_{m}\right) \leq \beta _{1}^{n} 
\left[ d\left( x_{1},x_{0})\right) +d\left( y_{1},y_{0}\right) \right] .
\end{equation*}

This implies that%
\begin{equation*}
\lim_{n,m\rightarrow \infty }\left[ d\left( x_{n},x_{m}\right) +d\left(
y_{n},y_{m}\right) \right] =0.
\end{equation*}

So, $\left\{ x_{n}\right\} $ and $\left\{ y_{n}\right\} $ are indeed a
Cauchy sequence in the complete metric space $X$ and hence, convergent:
there exist $x,y\in X$ such that%
\begin{equation*}
\lim_{n\rightarrow \infty }x_{n}=x\text{ \ \ \ and \ \ }\lim_{n\rightarrow
\infty }y_{n}=y\text{.\ }
\end{equation*}

Taking limit both sides in (\ref{iy1}) and using continuity of $F$, we get%
\begin{equation*}
x=\lim_{n\rightarrow \infty }x_{n}=\lim_{n\rightarrow \infty
}F(x_{n-1},y_{n-1})=F\left( \lim_{n\rightarrow \infty
}(x_{n-1},y_{n-1})\right) =F(x,y)
\end{equation*}%
and%
\begin{equation*}
y=\lim_{n\rightarrow \infty }y_{n}=\lim_{n\rightarrow \infty
}F(y_{n-1},x_{n-1})=F\left( \lim_{n\rightarrow \infty
}(y_{n-1},x_{n-1})\right) =F(y,x).
\end{equation*}

Therefore,%
\begin{equation*}
x=F(x,y)\text{ and }y=F(y,x),
\end{equation*}%
that is, $(x,y)$ is a coupled fixed point of $F$.

b) If there exist two distinct coupled fixed points $\left( x,y\right)
,\left( u,v\right) $ of $F$, then%
\begin{eqnarray*}
d\left( x,u\right) +d\left( y,v\right) &=&d\left( F(x,y),F(u,v)\right)
+d\left( F(y,x),F(v,u)\right) \\
&\leq &\left[ d\left( x,F(u,v)\right) +d\left( y,F(v,u)\right) +d\left(
u,F(x,y)\right) \right. \\
&&\left. +d\left( v,F(y,x)\right) \right] \left[ d\left( x,u\right) +d\left(
y,v\right) \right] \\
&&+\left[ d\left( y,F(v,u)\right) +d\left( x,F(u,v)\right) +d\left(
v,F(y,x)\right) \right. \\
&&\left. +d\left( u,F(x,y)\right) \right] \left[ d\left( x,u\right) +d\left(
y,v\right) \right] \\
&=&\left[ d\left( x,u\right) +d\left( y,v\right) \right] \left[ 4d\left(
x,u\right) +4d\left( y,v\right) \right] \\
&=&4\left[ d\left( x,u\right) +d\left( y,v\right) \right] ^{2}.
\end{eqnarray*}

Therefore, we obtain that $d\left( x,u\right) +d\left( y,v\right) \geq \frac{%
1}{4}$.
\end{proof}

Now, we will give the following example for such type of mappings which
satisfy (\ref{id1}).

\begin{example}
\label{cf2}Let $X=\left\{ 0,1\right\} $ and $x\leq y\Leftrightarrow x,y\in
\left\{ 0,1\right\} $ and $x\leq y$ where "$\leq $" be usual ordering then $%
\left( X,\leq \right) $ be a partially ordered set. Let $d:X\times
X\rightarrow \left[ 0,\infty \right) $ be defined by%
\begin{eqnarray*}
d\left( 0,1\right) &=&2\text{ , }d\left( 0,0\right) =d\left( 1,1\right) =0,
\\
d\left( a,b\right) &=&d\left( b,a\right) \text{ , }\forall a,b\in X.
\end{eqnarray*}%
Then $\left( X,d\right) $ is a complete metric space.

We define $F:X\times X\rightarrow X$ as%
\begin{equation*}
F(0,0)=0\text{ , }F(0,1)=0\text{ , }F(1,0)=1\text{ , }F(1,1)=1.
\end{equation*}

Then $F$ is continuous and has the mixed monotone property. It is obvious
that $\left( 0,0\right) $, $\left( 1,0\right) $, $\left( 0,1\right) $ and $%
(1,1)$ are the coupled fixed points of $F$. If we take $x=1$, $y=0$, $u=0$
and $v=0$ then we have%
\begin{equation*}
d\left( F(x,y),F(u,v)\right) =d\left( F(1,0),F(0,0)\right) =d\left(
1,0\right) =2.
\end{equation*}

Also, for same value of $x,y,u$ and $v$, we obtain%
\begin{equation*}
\frac{k}{2}\left[ d\left( x,u\right) +d\left( y,v\right) \right] =\frac{k}{2}%
\left[ d\left( 1,0\right) +d\left( 0,0\right) \right] =k
\end{equation*}

Thus the condition $d\left( F(x,y),F(u,v)\right) \leq \frac{k}{2}\left[
d\left( x,u\right) +d\left( y,v\right) \right] $ where $\forall x\geq u$, $%
y\leq v$ of Theorem \ref{cf1} is not true for any $k\in \lbrack 0,1)$. Thus
we can not use Theorem \ref{cf1} for the mapping $F$. On the other hand we
will show that the mapping $F$ satisfies the condition of Theorem \ref{cf0}.
Now for $x=1$, $y=0$, $u=0$, $v=0$ or $\ x=1,y=0,u=1,v=1$, we have following
possibilities for values of $(x,y)$ and $(u,v)$ such that $x\geq u$ and $%
y\leq v.$

Case 1: If we take $(x,y)=(u,v)=r$ where $r=(0,0)$ or $(1,1)$ or $(1,0)$ or $%
(0,1)$, then $d\left( F(x,y),F(u,v)\right) =0$. Thus, the inequality (\ref%
{id1}) holds.

Case 2: If we take $(x,y)=(0,0),(u,v)=(0,1)$ or $(x,y)=(0,0),(u,v)=(0,1)$,
then $d\left( F(x,y),F(u,v)\right) =0$. Thus, the inequality (\ref{id1})
holds.

Case 3: If we take $(x,y)=(1,0)$ and $(u,v)=(0,0)$, then%
\begin{equation*}
d\left( F(x,y),F(u,v)\right) =d\left( F(1,0),F(0,0)\right) =d(1,0)=2,
\end{equation*}%
and%
\begin{eqnarray*}
&&\frac{d\left( x,F(u,v)\right) +d\left( y,F(v,u)\right) +d\left(
u,F(x,y)\right) +d\left( v,F(y,x)\right) }{1+2\left[ d\left( x,F(x,y)\right)
+d\left( y,F(y,x)\right) +d\left( u,F(u,v)\right) +d\left( v,F(v,u)\right) %
\right] }\left[ d\left( x,u\right) +d\left( y,v\right) \right] \\
&=&\frac{d\left( 1,F(0,0)\right) +d\left( 0,F(0,0)\right) +d\left(
0,F(1,0)\right) +d\left( 0,F(0,1)\right) }{1+2\left[ d\left( 1,F(1,0)\right)
+d\left( 0,F(0,1)\right) +d\left( 0,F(0,0)\right) +d\left( 0,F(0,0)\right) %
\right] }\left[ d\left( 1,0\right) +d\left( 0,0\right) \right] \\
&=&\frac{d\left( 1,0\right) +d\left( 0,0\right) +d\left( 0,1\right) +d\left(
0,0\right) }{1+2\left[ d\left( 1,1\right) +d\left( 0,0\right) +d\left(
0,0\right) +d\left( 0,0\right) \right] }\left[ d\left( 1,0\right) +d\left(
0,0\right) \right] \\
&=&8.
\end{eqnarray*}

Or, taking $(x,y)=(1,0),(u,v)=(0,1)$, we have%
\begin{equation*}
d\left( F(x,y),F(u,v)\right) =d\left( F(1,0),F(0,1)\right) =d(1,0)=2,
\end{equation*}%
and%
\begin{eqnarray*}
&&\frac{d\left( x,F(u,v)\right) +d\left( y,F(v,u)\right) +d\left(
u,F(x,y)\right) +d\left( v,F(y,x)\right) }{1+2\left[ d\left( x,F(x,y)\right)
+d\left( y,F(y,x)\right) +d\left( u,F(u,v)\right) +d\left( v,F(v,u)\right) %
\right] }\left[ d\left( x,u\right) +d\left( y,v\right) \right] \\
&=&\frac{d\left( 1,F(0,1)\right) +d\left( 0,F(1,0)\right) +d\left(
0,F(1,0)\right) +d\left( 1,F(0,1)\right) }{1+2\left[ d\left( 1,F(1,0)\right)
+d\left( 0,F(0,1)\right) +d\left( 0,F(0,1)\right) +d\left( 1,F(1,0)\right) %
\right] }\left[ d\left( 1,0\right) +d\left( 0,1\right) \right] \\
&=&\frac{d\left( 1,0\right) +d\left( 0,1\right) +d\left( 0,1\right) +d\left(
1,0\right) }{1+2\left[ d\left( 1,1\right) +d\left( 0,0\right) +d\left(
0,0\right) +d\left( 1,1\right) \right] }\left[ d\left( 1,0\right) +d\left(
0,1\right) \right] \\
&=&32.
\end{eqnarray*}

Therefore, the inequality (\ref{id1}) holds.

Case 4: If we take $(x,y)=(1,0)$ and $(u,v)=(1,1)$, then%
\begin{equation*}
d\left( F(x,y),F(u,v)\right) =d\left( F(1,0),F(1,1)\right) =d(1,1)=0,
\end{equation*}%
that is, the inequality (\ref{id1}) holds.

Thus all the conditions of Theorem \ref{cf0} are satisfied. Also, $F$ has
four distinct coupled fixed points $\left( 0,0\right) $, $\left( 1,0\right) $%
, $\left( 0,1\right) $ and $(1,1)$\ in $X$ and $d\left( x,u\right) +d\left(
y,v\right) \geq \frac{1}{4}$ where $\left( x,y\right) ,\left( u,v\right) $
are two distinct coupled fixed points of $F$.
\end{example}

\begin{remark}
The ratio%
\begin{equation}
\frac{d\left( x,F(u,v)\right) +d\left( y,F(v,u)\right) +d\left(
u,F(x,y)\right) +d\left( v,F(y,x)\right) }{1+2\left[ d\left( x,F(x,y)\right)
+d\left( y,F(y,x)\right) +d\left( u,F(u,v)\right) +d\left( v,F(v,u)\right) %
\right] }  \label{rt1}
\end{equation}%
might be greater or less than $\frac{1}{2}$ and has not introduced an upper
bound. If $d\left( x,u\right) +d\left( y,v\right) <\frac{1}{4}$ for every $%
x,y\in X$, then we have%
\begin{eqnarray*}
&&d\left( x,F(u,v)\right) +d\left( y,F(v,u)\right) +d\left( u,F(x,y)\right)
+d\left( v,F(y,x)\right) \\
&\leq &d\left( x,u\right) +d\left( u,F(u,v)\right) +d(y,v)+d\left(
v,F(v,u)\right) +d\left( u,x\right) \\
&&+d\left( x,F(x,y)\right) +d\left( v,y\right) +d\left( y,F(y,x)\right) \\
&=&2d\left( x,u\right) +2d\left( y,v\right) +d\left( x,F(x,y)\right)
+d\left( y,F(y,x)\right) +d\left( u,F(u,v)\right) +d\left( v,F(v,u)\right) \\
&<&\frac{1}{2}+d\left( x,F(x,y)\right) +d\left( y,F(y,x)\right) +d\left(
u,F(u,v)\right) +d\left( v,F(v,u)\right) \\
&=&\frac{1}{2}\left( 1+2\left[ d\left( x,F(x,y)\right) +d\left(
y,F(y,x)\right) +d\left( u,F(u,v)\right) +d\left( v,F(v,u)\right) \right]
\right) .
\end{eqnarray*}

It means that%
\begin{equation*}
\left( \frac{d\left( x,F(u,v)\right) +d\left( y,F(v,u)\right) +d\left(
u,F(x,y)\right) +d\left( v,F(y,x)\right) }{1+2\left[ d\left( x,F(x,y)\right)
+d\left( y,F(y,x)\right) +d\left( u,F(u,v)\right) +d\left( v,F(v,u)\right) %
\right] }\right) <\frac{1}{2}.
\end{equation*}%
which is a special case of the following Theorem \ref{cf1}. Therefore, when $%
\left( X,d\right) $ is a complete metric space such that, for all $x,y\in X$%
, $d\left( x,u\right) +d\left( y,v\right) \geq \frac{1}{4}$, the above
Theorem is valuable because (\ref{rt1}) might be greater than $\frac{1}{2}.$
\end{remark}

\begin{remark}
The example \ref{cf2} does not satisfy the conditions of Theorem \ref{cf1}.
That is, we can not say $F$ has a coupled fixed point in $X$ or not. But, we
can see that $F$ has a coupled fixed point in $X$ from Theorem \ref{cf0}. In
other words the Theorem \ref{cf0} is a generalization of Theorem \ref{cf1}.
\end{remark}

\end{document}